\documentclass[12pt,a4paper]{article}

\usepackage[a4paper,top=24mm,bottom=24mm,left=25mm,right=25mm]{geometry}
\usepackage{amsmath,amssymb,amsthm,mathtools,bm}
\usepackage{booktabs,tabularx,array,longtable}
\usepackage{xcolor}
\usepackage{enumitem}
\usepackage{fancyhdr}
\usepackage{hyperref}
\usepackage[nameinlink,noabbrev]{cleveref}
\usepackage{microtype}

\setlist{nosep,leftmargin=2.2em}
\allowdisplaybreaks[3]
\numberwithin{equation}{section}

\renewcommand{\headrulewidth}{0.4pt}
\newtheorem{theorem}{Theorem}[section]
\newtheorem{lemma}[theorem]{Lemma}

\newtheorem{remark}[theorem]{Remark}
\newtheorem{definition}[theorem]{Definition}

\newcommand{\R}{\mathbb{R}}
\newcommand{\eps}{\varepsilon}
\newcommand{\dd}{\,\mathrm{d}}
\newcommand{\esssup}{\operatorname*{ess\,sup}}
\newcommand{\avg}[1]{\overline{#1}}
\newcommand{\pdim}{\overline{\dim}_{\mathrm B,p}}

\newcommand{\ep}{\varepsilon}

\newcommand{\norm}[1]{{\left\Vert #1 \right\Vert}}

\begin{document}
\thispagestyle{empty} \thispagestyle{fancy} {
	\fancyhead[RO,LE]{\scriptsize \bf }
	\fancyfoot[CE,CO]{}}
\renewcommand{\headrulewidth}{0pt}
\begin{center}
	{\bf \Large An $\eps$-regularity criterion at one scale for  3-D Navier-Stokes equations and improved upper bound on the box dimension of the interior singular set}
	
	\vspace{4mm}
	
	{\small Yi Lu  \quad  Rongrong Chi  \quad Wei Ren\footnote{Corresponding author, email:
			renwei4321@163.com.cn}
		
		{\small  Hebei Key Laboratory of Machine Learning and Computational Intelligence, College of Mathematics and Information Science, Hebei
			University, Baoding, 071002, PR China.}}
\end{center}

\vspace{3mm}

\begin{center}
	\begin{minipage}{140mm}
{\bf \small Abstract}. 
In this paper, we obtain an  $\varepsilon$-regularity criterion at one scale for 3D Navier-Stokes equations using a coarse-to-refined estimate argument based on mean-fluctuation decomposition.
Making use of this  $\varepsilon$-regularity criterion, we improve the upper bound on the box dimension of the interior singular set to  $15/13$.

\medskip
\noindent\textbf{Keywords:} three-dimensional Navier--Stokes equations;
 singular point set; upper box dimension;  $\eps$-regularity criterion

\medskip
\noindent {\bf MSC(2000):} 76D03, 76D05, 35B33, 35Q35
\end{minipage}
\end{center}

\vspace{5mm}

\section{Introduction}
Let $\Omega$ be a domain in $\R^3$ and let $T>0$. We consider the
three-dimensional incompressible Navier--Stokes equations
\begin{equation}\label{eq:NS}
	\begin{cases}
		&\partial_tu-\Delta u+(u\cdot\nabla)u+\nabla\pi=0,
		\text{in } \Omega_T:=\Omega\times(-T,0),\\
		&\nabla\cdot u=0, \\
		&u|_{t=0}=u_0.
	\end{cases}
\end{equation}
Here $u: \Omega_T\to\R^3$ is the velocity and $\pi$ is the
pressure. A point $z_0=(x_0,t_0)$ is called regular if $u$ is essentially
bounded in some parabolic neighborhood of $z_0$. The remaining interior points
form the interior singular set denoted by $\mathcal{S}$. Denote $B_r(x_0)=\{y\in \mathbb{R}^{3}:|x_0-y|\leq r\}$, $B_r=B_r(0)$, $Q_r(z_0)=B_r(x_0)\times(t_0-r^2,t_0)$, \ $I_r=(-r^2,0)$, $Q(r)=B_r\times I_r$, $L^{p,q}(Q(r))=L^p(-r^2,0; L^q(B_r)), \ L^{p}(Q(r))=L^{p,p}(Q(r))$.

The study of partial regularity for suitable weak solutions to 3D Navier-Stokes equations was initialed from Scheffer \cite{[Scheffer1]}-\cite{[Scheffer3]}.
In a celebrated work \cite{[CKN]},  Caffarelli, Kohn and Nirenberg obtained two $\ep$-regularity criteria to  suitable weak solutions of \eqref{eq:NS}: $z=0$ is a regular point provided that one of the following conditions holds for an absolute positive constant $\ep$,
\begin{equation}
	\label{CKN}		
	\norm{u}_{L^{3}(Q(1))}+\|u\pi\|_{L^1(Q(1))}+\|\pi\|_{L^{1,5/4}(Q(1))} < \ep.
\end{equation}
and
\begin{equation}
	\label{ckn2}
	\limsup_{r\rightarrow0}  r^{-1/2}\norm{\nabla u}_{L^{2}(Q(r))} < \ep.
\end{equation}
Lin \cite{[Lin]}, Ladyzhenskaya and Seregin \cite{[LS]} gave an alternative condition
\begin{equation}\label{Lin}
	\norm{u}_{L^{3}(Q(1))}+\|\pi\|_{L^{3/2}(Q(1))} < \ep.
\end{equation}
 Vasseur \cite{[Vasseur]}, further obtained for any $p>1$,
\begin{equation}\label{Vasssmall}
\|u\|_{L^{\infty, 2}(Q(1))} + \|\nabla u\|_{L^{2}(Q(1))}  +\|\Pi\|_{L^{p, 1}(Q(1))} <\varepsilon.
\end{equation}
 In this direction, Wang and Zhang \cite{[WZ]} obtained
\begin{equation}\label{WZ}
\|u\|_{L^{\infty,2}(Q(1))} +\| u\|_{L^{2, 4} (Q(1))}+\|\Pi\|_{L^{1,2}(Q(1))} < \varepsilon. 
\end{equation}
Choi   and Vasseur \cite{[CV]}, further derived
\begin{equation}\label{CVGP}
\|u\|_{L^{\infty,2}(Q(1))} + \|\nabla u\|_{L^{2}(Q(1))}  +\|\Pi\|_{L^{1}(Q(1))} < \varepsilon.
\end{equation}
 Guevara and Phuc \cite{[GP]}  improved criteria to
\begin{equation}\label{GP}
	\norm{u}_{L^{2p, 2q} (Q(1))}+\|\pi\|_{L^{p, q}(Q(1))} < \ep,
\end{equation}
for some $p, q$ satisfying ${2}/{p}+{3}/{q} = 7/2$ and $1\leq q\leq2$.
Subsequently, He, Wang and Zhou \cite{[HWZ]} found  that \eqref{GP} can be replaced with
\begin{equation}
	\label{optical}
	\norm{u}_{L^{p,q}(Q(1))}+\|\pi\|_{L^{1}(Q(1))}<\ep
\end{equation}
for some $p, q$ satisfying $1\leq 2/p+3/q <2$ and $1\leq p,\,q\leq\infty$.
Since the gradient of the pressure appears in \eqref{eq:NS}, one can replace $\Pi$ with 
$\Pi-\overline{\Pi}_{B(1)}$ by subtracting an average in above criteria.
In \cite{[CKN]}, Caffarelli et al. proved $\dim_{H}(\mathcal{S})\leq 1$ in terms of $\eps$- regularity criterion at all scales \eqref{ckn2}. While  Robinson and  Sadowski used  $\eps$- regularity criterion at one scale \eqref{Lin}  to obtain $\dim_{B}(\mathcal{S})\leq5/3(\approx 1.67)$ in \cite{[RS1]}.
Here $\dim_{H}(S)$ and $\dim_{B}(S)$ denote the Hausdorff dimension and box dimension of the set $S$, respectively.
For the background of fractal dimension, we refer the reader to \cite{[Falconer]}.
The relationship between Hausdorff dimension and box dimension is that the former  is less than the latter.

In the past decade,  several authors have tried to lower the box dimension of potential interior singular points for suitable weak solutions to the 3D Navier--Stokes equations to $1$ (Kukavica  \cite{[Kukavica]} $(135/82\approx 1.65)$; Kukavica and  Pei \cite{[KP]} $45/29(\approx1.55)$; Koh and Yang \cite{[KY16]} $95/63( 1.51)$; Wang and Wu $(360/277\approx1.30)$; Ren, Wang and Wu \cite{[RWW]} $(975/758\approx1.29)$; He, Wang and Zhou  \cite{[HWZ]} $(2400/1903\approx1.261)$; Wang  and Yang \cite{WY} $(7/6\approx 1.167)$). 

In this paper, making use of coarse-to-refined estimates based on mean-fluctuation decomposition, we further slightly lower the upper bound to $15/13\approx 1.154$.

\begin{definition}(Upper box dimension)
Define the parabolic distance by
\begin{equation}\label{eq:parabolic-distance}
d_{\mathrm p}\bigl((x,t),(y,s)\bigr)
:=\max\bigl\{|x-y|,|t-s|^{1/2}\bigr\}.
\end{equation}
For a compact set $X$, let $N_{\mathrm p}(X,r)$ be the least number of
parabolic balls of radius $r$ needed to cover $X$, and define the upper box dimension of $X$ as
\begin{equation}\label{eq:box-dim}
\pdim X
:=\limsup_{r\to 0}
\frac{\log N_{\mathrm p}(X,r)}{-\log r}.
\end{equation}
\end{definition}

\begin{theorem}(Local $\eps$-criterion at one scale)\label{thm:density}	
	There exist an absolute constant $\eps>0$ and a radius $\rho_0<1$ with the following property. If
	$Q_{4\rho}(z_0)\Subset\Omega\times(-T,0)$ with $\rho\le \rho_0$ and
	\begin{equation}\label{eq:density-criterion}
		\begin{aligned}
			\iint_{Q_{4\rho}(z_0)}\Bigl(
			|\nabla u|^2+|u|^{10/3}
			+|\pi-\avg\pi_{B_{4\rho}}(t)|^{5/3}
			+|\nabla\pi|^{5/4}
			\Bigr)\dd x\dd t
			\le \eps\rho^{15/13},
		\end{aligned}
	\end{equation}
	then $z_0$ is a regular point. Here the pressure mean is taken only in space:
	\[
	\avg\pi_{B_r}(t):=\frac1{|B_r|}\int_{B_r}\pi(x,t)\dd x.
	\]
\end{theorem}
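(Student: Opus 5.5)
By translation invariance we may take $z_0=0$. The plan is to reduce \eqref{eq:density-criterion} to one of the known one-scale criteria of \eqref{optical} type at a small scale $r=\rho^{\alpha}$ (with some $\alpha>1$ to be fixed), after rescaling to the unit cylinder. We work with the scaling-invariant quantities
\[
E(r)=r^{-1}\|\nabla u\|^2_{L^2(Q(r))},\quad C(r)=r^{-2}\|u\|^3_{L^3(Q(r))},\quad D(r)=r^{-2}\|\pi-\avg\pi_{B_r}\|^{3/2}_{L^{3/2}(Q(r))},
\]
and with the local energy quantity $A(r)=\sup_{t\in I_r} r^{-1}\|u(t)\|^2_{L^2(B_r)}$. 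Once at some $r\le\rho$ we have $C(r)+D(r)<\eps_1$, the criterion \eqref{Lin}, applied to the rescaled pair $u_r(x,t)=ru(rx,r^2t)$, $\pi_r=r^2\pi(rx,r^2t)$, gives regularity of $0$.

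The first step is the coarse estimate at scale $\rho$. By Hölder's inequality the hypothesis gives
\[
\|u\|_{L^{3}(Q(4\rho))}^3\lesssim (\eps\rho^{15/13})^{9/10}\rho^{5/10},\qquad \|\pi-\avg\pi_{B_{4\rho}}\|^{3/2}_{L^{3/2}(Q(4\rho))}\lesssim (\eps\rho^{15/13})^{9/10}\rho^{5/10}.
\]
Hence $C(4\rho)+D(4\rho)\lesssim \eps^{9/10}\rho^{27/26+1/2-2}$, which blows up as $\rho\to0$. A direct application at scale $\rho$ therefore fails, and this is exactly why the refined decomposition is needed.

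The second step is the mean-fluctuation refinement. We write $u=\avg u_{B_r}(t)+(u-\avg u_{B_r}(t))$. Poincaré's inequality controls the fluctuation by $\nabla u$, where the hypothesis is strong: $E(4\rho)\le\eps\rho^{2/13}$ is already small. The mean part is controlled by $|\avg u_{B_r}(t)|\lesssim r^{-9/10}\|u(t)\|_{L^{10/3}}$. The pressure is handled the same way: $\pi-\avg\pi_{B_r}$ is bounded by Poincaré in terms of $\|\nabla\pi\|_{L^{5/4}}$, and Sobolev embedding places it in $L^{15/7}_x$. We then use the local energy inequality, with a cutoff on $Q(2\rho)$, to bound $A(r)+E(r)$ for $r\le\rho$ in terms of these quantities. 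We iterate from $r=\rho$ down to $r=\theta^k\rho$ via the standard decay lemma:
\[
A(\theta r)+E(\theta r)\le C\theta\,(\cdots)+C\theta^{-2}\bigl(\text{fluctuation terms at }r\bigr).
\]
Each term in the bracket is a power of $\eps$ times a power of $\rho$. The exponent $15/13$ is designed so that, after balancing the scale $r=\rho^{\alpha}$ in these terms, all of $\rho$'s exponents are $\ge0$.

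The third step, and the main obstacle, is the exponent bookkeeping: we must choose the intermediate scale and the Hölder/interpolation exponents (e.g.\ $\|u\|_{L^3}\le\|u\|_{L^{\infty,2}}^{\theta}\|\nabla u\|^{1-\theta}_{L^2}$-type bounds together with the mean term) so that the worst term, namely the nonlinear pressure/transport term $\int|u|^2|\avg u|\,|\nabla\phi|$, produces the power $\rho^{0}$ exactly at $15/13$. We first try to optimize $r=\rho^{\alpha}$ by equating the exponents from the mean-velocity term and the pressure-gradient term. If this fails, we fall back on the He--Wang--Zhou criterion \eqref{optical} with $u\in L^{p,q}$, where $2/p+3/q$ is slightly below $2$, which loosens the balance. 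Taking $\rho_0$ small absorbs all constants, and the conclusion follows.
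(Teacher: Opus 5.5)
The ingredients you list for the energy step (Poincar\'e for the fluctuation, Poincar\'e--Sobolev into $L^{15/7}$ for the pressure, a local energy inequality on $Q_{2\rho}$) are the right ones. However, the proposal stops exactly where the proof has to happen, and the criterion you aim at cannot be reached this way.

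Lin's criterion is cubic in $u$. The $L^3$ norm of the fluctuation $w=u-\avg u_{B_\rho}$ cannot be paid for by $E$ alone: it needs $L^\infty_tL^2_x$ control, i.e.\ a power of $A(\rho)$. What \eqref{eq:density-criterion} yields, even after the refined energy estimate, is the large bound $A(\rho)\lesssim\eps^{3/5}\rho^{-4/13}$, together with $E(\rho)\lesssim\eps\rho^{2/13}$. At $r=\theta\rho$ the mean part of $r^{-2}\|u\|_{L^3(Q_r)}^3$ is controlled only by $\theta^3A(\rho)^{3/2}$, which forces $\theta\lesssim\rho^{2/13}$. The fluctuation part, via $\|w\|_{L^3(B_r)}^3\lesssim r^{1/2}\|w\|_{L^2(B_\rho)}\|\nabla u\|_{L^2(B_\rho)}^2$, is bounded by $\theta^{-3/2}A(\rho)^{1/2}E(\rho)\lesssim\eps^{13/10}\theta^{-3/2}$. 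For $\theta\lesssim\rho^{2/13}$ this bound is at least $\eps^{13/10}\rho^{-3/13}$.

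The missing idea is to target a criterion whose velocity part is only quadratic and $L^2$ in time, so that the fluctuation costs nothing but $E$. The paper uses the Wang--Yang criterion of Lemma~\ref{lem:epsilon-regularity}, $E_2+E+P_{1,3/2}(\nabla\pi)\le\eps_0$. It combines this with Lemma~\ref{lem:velocity-interpolation}, $E_2(\theta\rho)\lesssim\theta^{-1}E(\rho)+\theta^2A(\rho)$, and with $E(\theta\rho)\le\theta^{-1}E(\rho)$. With $\theta=\rho^{2/13}$ all of these are $O(\eps^{3/5})$ after a single jump. No iteration over $\theta^k\rho$ is needed (it is hidden inside Lemma~\ref{lem:epsilon-regularity}), and the one you sketch is never specified.

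The pressure has the same defect, and the He--Wang--Zhou fallback does not cure it. (Its velocity part could be handled with $p=2$, $3<q\le 6$, for the reason just given.) Every criterion you cite requires a pressure quantity to be small at the final scale. Poincar\'e--Sobolev only gives $P_{5/4}(\nabla\pi,\rho)\lesssim\eps^{4/5}\rho^{-1/13}$, which is not small and picks up a factor $\theta^{-1}$ at scale $\theta\rho$. You need a decay estimate such as Lemma~\ref{lem:pressure-decay}, where the local part is bounded by $E(\rho)$ alone and the harmonic part decays like $r/\rho$. This gives $P_{1,3/2}(\nabla\pi,\theta\rho)\lesssim\theta^{-1}E(\rho)+\theta P_{5/4}(\nabla\pi,\rho)\lesssim\eps^{4/5}$.

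Finally, in the energy step the mean-fluctuation split only pays off through two exact cancellations you do not use. With $m=\avg u_{B_{2\rho}}$, they are $\int_{B_{2\rho}}|m|^2m\cdot\nabla\phi\dd x=0$ and $\int_{B_{2\rho}}(\pi-\avg\pi_{B_{2\rho}})\,m\cdot\nabla\phi\dd x=-\int_{B_{2\rho}}\phi\,m\cdot\nabla\pi\dd x$. The term $\int|u|^2|\avg u|\,|\nabla\phi|$ that you call the worst, estimated in absolute value, contains $\iint|m|^3|\nabla\phi|$, of size $\rho A(2\rho)^{3/2}$. This would give only $A(\rho)\lesssim\eps^{9/10}\rho^{-27/52}$. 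Then making $\theta^2A(\rho)$ small requires $\theta\lesssim\rho^{27/104}$, while making $\theta^{-1}E(\rho)$ small requires $\theta\gtrsim\rho^{16/104}$, which is incompatible.

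After the cancellations, the $m|v|^2$ piece is actually the most harmless one ($\lesssim\eps^{13/10}\rho^{51/52}$). The exponent is set instead by the linear term $\rho^{-2}\iint_{Q_{2\rho}}|u|^2$, tied with the $v$-part of the pressure term. With right-hand side $\eps\rho^\gamma$ one needs $\theta\ge\rho^{\gamma-1}$ and $\theta^2\rho^{3\gamma/5-1}\lesssim 1$, and these are compatible exactly when $\gamma\ge 15/13$.
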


Based on Theorem \eqref{thm:density}, we can prove the following involving the upper box dimension on interior singular point set.
\begin{theorem}(Main theorem)\label{thm:main}
Let $(u,\pi)$ be a suitable weak solution of \eqref{eq:NS},
then we have
\begin{equation}\label{eq:main-result}
\pdim(\mathcal{S})\le\frac{15}{13}.
\end{equation}
\end{theorem}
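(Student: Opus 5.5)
Since Theorem \ref{thm:density} is stated before the main theorem and may be assumed, the proof of Theorem \ref{thm:main} should be a covering argument of the type used by Robinson and Sadowski \cite{[RS1]}. Because box dimension is a local notion, we first reduce to a compact piece of the singular set: fix a compact set $K\Subset\Omega\times(-T,0)$ and show $\pdim(\mathcal S\cap K)\le 15/13$. (For an interior statement this is the intended meaning; alternatively one uses that $\mathcal S$ is compact in the interior, as in the cited works.) Set
\[
f:=|\nabla u|^2+|u|^{10/3}+|\pi-\avg\pi_{B_{4\rho}}(t)|^{5/3}+|\nabla\pi|^{5/4}.
\]
The first step is to record that the integrable part of $f$ is globally finite on a neighbourhood $K'$ of $K$. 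This uses the standard facts about suitable weak solutions. First, $u\in L^\infty L^2\cap L^2H^1$ gives $u\in L^{10/3}$ by interpolation. Second, local pressure estimates (Calder\'on--Zygmund applied to $-\Delta\pi=\partial_i\partial_j(u_iu_j)$) give $\pi\in L^{5/3}_{loc}$ and $\nabla\pi\in L^{5/4}_{loc}$; the latter follows from $u\cdot\nabla u\in L^{5/4}$ by H\"older ($L^{10/3}\cdot L^2$) and maximal regularity for the Stokes system. The subtracted spatial mean costs nothing, since $\|\pi-\avg\pi_{B_{4\rho}}(t)\|_{L^{5/3}(B_{4\rho})}\le 2\|\pi\|_{L^{5/3}(B_{4\rho})}$ pointwise in $t$. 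Hence
\[
\iint_{K'}\bigl(|\nabla u|^2+|u|^{10/3}+|\pi|^{5/3}+|\nabla\pi|^{5/4}\bigr)=:M<\infty .
\]

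Second, we run the covering. Fix $\rho\le\rho_0$ small enough that $Q_{4\rho}(z)\subset K'$ for $z\in K$. Let $\{z_i\}_{i=1}^N\subset\mathcal S\cap K$ be a maximal family of points with pairwise parabolic distance at least $8\rho$. The parabolic cylinders $Q_{4\rho}(z_i)$ are not exactly parabolic balls, since they are one-sided in time. Still, with separation $8\rho$, two cylinders $Q_{4\rho}(z_i)$, $Q_{4\rho}(z_j)$ can only overlap if $|x_i-x_j|<8\rho$ and $|t_i-t_j|<16\rho^2$, which contradicts the separation. If necessary, the separation constant is enlarged so that the cylinders are pairwise disjoint, or at least have overlap bounded by an absolute constant $C_0$. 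By maximality, the parabolic balls of radius $8\rho$ around the $z_i$ cover $\mathcal S\cap K$, so $N_{\mathrm p}(\mathcal S\cap K,8\rho)\le N$.

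Third, each $z_i$ is singular, so the contrapositive of Theorem \ref{thm:density} gives
\[
\iint_{Q_{4\rho}(z_i)}f>\eps\rho^{15/13}.
\]
The means subtracted here differ from cylinder to cylinder, but each term is bounded by the corresponding unshifted quantity as noted above. Summing over $i$ and using bounded overlap,
\[
N\eps\rho^{15/13}\le C_0\cdot 2^{5/3}M,
\qquad\text{hence}\qquad N\le C\rho^{-15/13}.
\]
Therefore
\[
\frac{\log N_{\mathrm p}(\mathcal S\cap K,8\rho)}{-\log(8\rho)}\le\frac{\log C+\tfrac{15}{13}\log(1/\rho)}{\log(1/\rho)-\log 8}\to\frac{15}{13}.
\]
Since the ratio only changes by a bounded factor between consecutive dyadic radii, the limsup over all $r\to0$ is controlled as well.

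The main obstacle is the first step: justifying global (local-in-$K'$) integrability of $|\pi|^{5/3}$ and $|\nabla\pi|^{5/4}$ for an arbitrary suitable weak solution on a general domain, where the pressure is only known in $L^{3/2}$ or $L^{5/3}$. For $\nabla\pi$, I would use a local decomposition $\pi=\pi_1+\pi_2$. Here $\pi_1$ solves the Stokes problem with localized forcing $u\cdot\nabla u\in L^{5/4}$, giving $\nabla\pi_1\in L^{5/4}$ by maximal regularity. The remainder $\pi_2$ is harmonic in $x$, so interior estimates give $\nabla\pi_2\in L^{5/4}$ on compact subsets from $\pi_2\in L^{5/4}L^1$. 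If this is not available in the chosen class of suitable weak solutions, I would take $\nabla\pi\in L^{5/4}_{loc}$ as part of the hypothesis, which is standard (e.g.\ for solutions constructed via Leray approximations, as in \cite{[CKN]}).
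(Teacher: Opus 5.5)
Your proposal is essentially the paper's proof. It takes a maximal separated family of singular points with pairwise disjoint cylinders of half the separation radius, applies the contrapositive of Theorem~\ref{thm:density} to get the lower bound $\eps\rho^{15/13}$ for the integral in \eqref{eq:density-criterion} on each cylinder, and sums against the integrability \eqref{eq:integrability}, with the spatial pressure mean removed by Jensen, to get $N\le C\rho^{-15/13}$. Your localization to $\mathcal S\cap K$ and your covering by two-sided parabolic balls are slightly more careful than the paper, which simply asserts that $\mathcal S$ lies in a compact set and that the one-sided cylinders $Q_\rho(z_j)$ cover it.

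One caution applies to both your proof and the paper's. Both take finiteness of the $|\pi|^{5/3}$ term from \eqref{eq:integrability}, and your Calder\'on--Zygmund justification of $\pi\in L^{5/3}_{loc}$ does not hold for a general suitable weak solution with only $\pi\in L^{3/2}$. The part of the pressure that is harmonic in $x$ is only as integrable in time as $\pi$ itself. For example, take $u=c(t)$ and $\pi=-c'(t)\cdot x$ with $c'\in L^{3/2}\setminus L^{5/3}$. So it is this term that must be taken as a hypothesis, rather than $\nabla\pi$, which your local decomposition does handle.
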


The rest of this paper is organized as follows.
In Section 2,  we introduce relevant notation and definitions of suitable weak solutions to \eqref{eq:NS} and  present a few auxiliary lemmas which are used in the proof of main theorem.
In Section 3, we prove the key estimate for $A(\rho)$ in terms of coarse-to-refined  argument based on mean-fluctuation decomposition, which enable us further improve Wang and Yang's results in \cite{WY}.
In Section 4, we prove Theorem \ref{thm:density} and Theorem \ref{thm:main}.
Throughout this paper, the generic positive constants $C$ may be different from line to line.
\section{Preliminaries}\label{sec:prelim}

\begin{definition} (Suitable weak solution)

A  pair   $(u, \pi)$  is called a suitable weak solution to the Navier--Stokes equations \eqref{eq:NS} if the following conditions are satisfied:
\begin{enumerate}
\item \label{eq:energy-class}
$u\in L^\infty_tL^2_x (\Omega_T)\cap L^2_tH^1_x  (\Omega_T),
\quad \pi\in L^{3/2}_{t,x} (\Omega_T)$;
\item
$(u, \pi)$~solves \eqref{eq:NS} in $\mathbb{R}^{3}\times (-T,\,0) $ in the sense of distributions;
\item
For every nonnegative $\phi\in C_0^\infty(\Omega\times(-T,0))$ and
almost every $t$,
\begin{equation}\label{eq:LEI}
\begin{aligned}
&\int_\Omega |u(x,t)|^2\phi(x,t)\dd x
+2\int_{-T}^{t}\!\int_\Omega |\nabla u|^2\phi\dd x\dd s\\
&\quad\le
\int_{-T}^{t}\!\int_\Omega |u|^2(\partial_s\phi+\Delta\phi)\dd x\dd s
+\int_{-T}^{t}\!\int_\Omega (|u|^2+2\pi)
u\cdot\nabla\phi\dd x\dd s.
\end{aligned}
\end{equation}
\end{enumerate}
\end{definition}

\begin{remark}
From interpolation inequality and the relation between $u$ and $\pi$, we know following quantities
\begin{equation}\label{eq:integrability}
	|u|^{10/3},\quad |\nabla u|^2,\quad
	|\pi|^{5/3},\quad |\nabla\pi|^{5/4}
\end{equation}
all belong to $ L^1(\Omega_T)$.
\end{remark}

Without loss of generality, we assume $z_0=(0,0)$, since N-S equations \eqref{eq:NS} is invariant under translation.
Define
\begin{equation}\label{eq:functionals}
\begin{aligned}
A(r)&:=r^{-1}\esssup_{t_0-r^2<t<t_0}
       \int_{B_r}|u(x,t)|^2\dd x,\\
E(r)&:=r^{-1}\iint_{Q_r}|\nabla u|^2\dd x\dd t,\\
E_p(r)&:=r^{p-5}\iint_{Q_r}|u|^p\dd x\dd t,\\
P_{5/4}(\nabla\pi,r)&:=r^{-1}
  \|\nabla\pi\|_{L^{5/4}(Q_r)},\\
P_{1,3/2}(\nabla\pi,r)&:=r^{-1}
  \|\nabla\pi\|_{L^{1,3/2}(Q_r)}.
\end{aligned}
\end{equation}
These quantities are invariant under the Navier--Stokes scaling
\[
u_\lambda(x,t)=\lambda u(\lambda x,\lambda^2t),
\qquad
\pi_\lambda(x,t)=\lambda^2\pi(\lambda x,\lambda^2t).
\]

The following are some lemmas used in the proof of Theorem \ref{thm:density}, most of which can be found in \cite{WY}.
\begin{lemma}(\cite{WY}, Scale interpolation for the velocity)\label{lem:velocity-interpolation}
If $0<r\le\rho/2$, then
\begin{equation}\label{eq:E2-interpolation}
E_2(r)
\le C\left(\frac{\rho}{r}\right)E(\rho) 
+C\Bigl(\frac r\rho\Bigr)^2A(\rho).
\end{equation}
\end{lemma}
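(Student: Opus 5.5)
We prove the interpolation inequality for $E_2(r)=r^{-3}\iint_{Q_r}|u|^2\dd x\dd t$ one time slice at a time, splitting $u$ into its spatial mean over $B_\rho$ and its fluctuation, and integrating in time at the end. Fix $t\in I_r\subset I_\rho$ and write
\[
u(x,t)=\avg u_{B_\rho}(t)+\bigl(u(x,t)-\avg u_{B_\rho}(t)\bigr),\qquad \avg u_{B_\rho}(t):=\frac1{|B_\rho|}\int_{B_\rho}u(y,t)\dd y.
\]
Then, since $B_r\subset B_\rho$ and $|a+b|^2\le 2|a|^2+2|b|^2$,
\[
\int_{B_r}|u(x,t)|^2\dd x\le 2|B_r|\,|\avg u_{B_\rho}(t)|^2+2\int_{B_\rho}|u-\avg u_{B_\rho}(t)|^2\dd x .
\]

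\emph{The mean term.} By Cauchy--Schwarz,
\[
|\avg u_{B_\rho}(t)|^2\le \frac{1}{|B_\rho|}\int_{B_\rho}|u(x,t)|^2\dd x\le C\rho^{-3}\cdot\rho A(\rho).
\]
Hence $|B_r|\,|\avg u_{B_\rho}(t)|^2\le C r^3\rho^{-2}A(\rho)$. Integrating over $t\in I_r$, an interval of length $r^2$, and multiplying by $r^{-3}$ gives the contribution
\[
C r^2\rho^{-2}A(\rho)=C\Bigl(\frac r\rho\Bigr)^2A(\rho),
\]
which is exactly the second term of \eqref{eq:E2-interpolation}.

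\emph{The fluctuation term.} By Poincar\'e's inequality on $B_\rho$,
\[
\int_{B_\rho}|u-\avg u_{B_\rho}(t)|^2\dd x\le C\rho^2\int_{B_\rho}|\nabla u(x,t)|^2\dd x .
\]
Integrating over $t\in I_r\subset I_\rho$ and multiplying by $r^{-3}$ gives at most
\[
C r^{-3}\rho^2\iint_{Q_\rho}|\nabla u|^2\dd x\dd t=C\frac{\rho^3}{r^3}E(\rho).
\]

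\emph{The main obstacle.} This fluctuation bound carries the factor $(\rho/r)^3$, whereas the lemma claims $(\rho/r)$. To improve it, the plan is to use a pointwise Sobolev bound in place of Poincar\'e's inequality. Combining H\"older's inequality on $B_r$ with the Sobolev--Poincar\'e inequality in $L^6$,
\[
\int_{B_r}|u-\avg u_{B_\rho}|^2\dd x\le |B_r|^{2/3}\Bigl(\int_{B_\rho}|u-\avg u_{B_\rho}|^6\dd x\Bigr)^{1/3}\le C r^2\int_{B_\rho}|\nabla u|^2\dd x .
\]
Integrating in time over $I_r$ and multiplying by $r^{-3}$ then gives
\[
C r^{-1}\iint_{Q_\rho}|\nabla u|^2\dd x\dd t=C\frac{\rho}{r}E(\rho),
\]
which is the claimed first term. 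Adding the mean and fluctuation contributions yields \eqref{eq:E2-interpolation}.

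The restriction $r\le\rho/2$ guarantees $B_r\subset B_\rho$ and $I_r\subset I_\rho$, so every integral above sits inside $Q_\rho$. The only delicate point is the choice of exponent in the fluctuation step: using $L^6$ (Sobolev--Poincar\'e) rather than $L^2$ (plain Poincar\'e) is what turns the $r^{-3}\rho^2$ scaling into $r^{-1}$.
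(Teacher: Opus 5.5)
Your final argument is correct: split $u$ into its spatial mean over $B_\rho$, bounded by $\rho^{-2}A(\rho)$, and the fluctuation on $B_r$, bounded via H\"older and the scale-invariant $L^6$ Sobolev--Poincar\'e inequality. The paper gives no proof of its own and only cites Wang and Yang; your argument is the standard mean--fluctuation proof of this estimate. In a clean write-up, drop the plain-Poincar\'e detour and keep the fluctuation integral over $B_r$ rather than enlarging it to $B_\rho$ in your first display, since the H\"older step needs the factor $|B_r|^{2/3}$.
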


\begin{lemma}(\cite{WY}, Pressure-gradient decay)\label{lem:pressure-decay}
For $0<r\le\rho/2$,
\begin{equation}\label{eq:pressure-decay}
P_{1,3/2}(\nabla\pi,r)
\le C \left( \frac\rho r\right)  E(\rho)
+C\left( \frac r\rho\right)  P_{5/4}(\nabla\pi,\rho).
\end{equation}
\end{lemma}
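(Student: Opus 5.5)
The statement to be proved is Lemma~\ref{lem:pressure-decay}. The plan is to split the pressure gradient on $B_\rho$ into a part driven by the nonlinearity and a harmonic remainder. The first part is controlled by $E(\rho)$ through a Calder\'on--Zygmund bound. The remainder is controlled by $P_{5/4}(\nabla\pi,\rho)$ together with an interior decay estimate for harmonic functions. By scaling invariance of both sides we may take $\rho=1$ and $0<r\le 1/2$.

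\emph{Step 1: decomposition.} Fix a cutoff $\phi\in C_0^\infty(B_1)$ with $\phi\equiv1$ on $B_{3/4}$. Since $u$ is divergence free,
\[
-\Delta\pi=\partial_i\partial_j(u_iu_j)=\partial_iu_j\,\partial_ju_i .
\]
For a.e.\ $t$ set
\[
\pi_1=(-\Delta)^{-1}\bigl(\phi\,\partial_iu_j\partial_ju_i\bigr),
\]
which is the Newtonian potential of an $L^1$ density, and set $\pi_2=\pi-\pi_1$. Then $\pi_2$ is harmonic in $B_{3/4}$.

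\emph{Step 2: the nonlinear part.} The gradient $\nabla\pi_1$ is a convolution of $\phi|\nabla u|^2$ with a kernel of size $|x|^{-2}$. This kernel lies in weak-$L^{3/2}$, so the weak-type Young/Hardy--Littlewood--Sobolev inequality gives
\[
\|\nabla\pi_1(t)\|_{L^{3/2,\infty}}\le C\|\nabla u(t)\|_{L^2(B_1)}^2 .
\]
The bound we actually need is on $L^{3/2}(B_r)$ in space. We get it by lowering the exponent slightly (weak $L^{3/2}$ embeds in $L^{q}(B_1)$ for $q<3/2$), or we accept a harmless factor. Alternatively, and more cleanly, we use H\"older on $B_r$ with the available weak-type bound. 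Integrating in time gives
\[
r^{-1}\|\nabla\pi_1\|_{L^{1,3/2}(Q_r)}\le Cr^{-1}E(1),
\]
which is the $(\rho/r)E(\rho)$ term.

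\emph{Step 3: the harmonic part.} Since $\nabla\pi_2$ is harmonic in $B_{3/4}$, the mean value property gives
\[
\sup_{B_{1/2}}|\nabla\pi_2(t)|\le C\|\nabla\pi_2(t)\|_{L^1(B_{3/4})}
\le C\bigl(\|\nabla\pi(t)\|_{L^{5/4}(B_1)}+\|\nabla\pi_1(t)\|_{L^1(B_{3/4})}\bigr).
\]
Therefore
\[
\|\nabla\pi_2(t)\|_{L^{3/2}(B_r)}\le Cr^{2}\sup_{B_{1/2}}|\nabla\pi_2(t)| .
\]
Integrating over $t\in(-r^2,0)$ and using H\"older in time, $\|\cdot\|_{L^1(I_r)}\le Cr^{2/5}\|\cdot\|_{L^{5/4}(I_r)}$, the first piece contributes
\[
r^{-1}\cdot r^{2}\cdot r^{2/5}\,P_{5/4}(\nabla\pi,1)\le C\,r\,P_{5/4}(\nabla\pi,1).
\]
The $\nabla\pi_1$ piece in $L^1$ is again bounded by $CE(1)$, hence by $C r^{-1}E(1)$. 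Summing the contributions of $\pi_1$ and $\pi_2$ gives \eqref{eq:pressure-decay}.

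The main obstacle is Step 2. Since $\nabla\pi_1\sim\nabla(-\Delta)^{-1}(|\nabla u|^2)$ maps $L^1$ only into weak $L^{3/2}$, obtaining a genuine $L^{3/2}$ norm requires care. I would first try the decomposition at the level of $u\otimes u$, writing $\nabla\pi_1=\nabla(-\Delta)^{-1}\partial_i\partial_j(\phi u_iu_j)$ together with the Sobolev bound $\|u\otimes u\|_{W^{1,1}}$. If that fails, I would accept weak $L^{3/2}$ and note that the paper's later use only needs $L^{1,q}$ with $q$ close to $3/2$.
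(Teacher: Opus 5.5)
Your Step 3 is fine. The overall architecture, a localized Calder\'on--Zygmund piece plus a remainder harmonic in $B_{3/4}$ handled by the mean value property, is the standard one behind this lemma; the paper imports the lemma from Wang--Yang without proof. The gap is Step 2, which is the real content of the lemma.

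Once you treat the source as the $L^1$ density $\phi|\nabla u|^2$, the kernel $|x|^{-2}$ only gives $\nabla\pi_1(t)\in L^{3/2,\infty}$. None of your patches upgrade this to $L^{3/2}(B_r)$:
\begin{itemize}
\item Weak $L^{3/2}$ is not contained in $L^{3/2}(B_r)$ for any $r>0$; H\"older on a bounded set only reaches $L^q$ with $q<3/2$.
\item For a general $L^1$ density the strong bound is actually false. A density concentrating at a point gives $|\nabla\pi_1|\sim|x|^{-2}$, whose $L^{3/2}$ norm diverges logarithmically. The cancellation in $\partial_iu_j\partial_ju_i=\partial_i\partial_j(u_iu_j)$ is exactly what you discard when you bound it by $|\nabla u|^2$.
\item Switching to $L^{1,q}$ with $q<3/2$ proves a different statement, while Lemma~\ref{lem:epsilon-regularity} is formulated with $P_{1,3/2}$.
\item The fallback via $\|u\otimes u\|_{W^{1,1}}$ also fails. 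Calder\'on--Zygmund operators are not bounded on $L^1$, so $W^{1,1}$ control of $u\otimes u$ gives no $L^{3/2}$ bound on $\nabla\pi_1$; what you need is $\nabla(u\otimes u)\in L^{3/2}$. Moreover, any bound in terms of $u\otimes u$ itself involves the spatial mean of $u$, which $E(\rho)$ does not control.
\end{itemize}

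The missing idea is to use $\nabla\cdot u=0$ to pass to the oscillation of $u$ while keeping divergence form. Take $\rho=1$, $m(t)=\avg u_{B_1}(t)$ and $v=u-m$. The cross terms $\partial_i\partial_j(m_iv_j)=m_i\partial_i(\nabla\cdot u)$ vanish, so $-\Delta\pi=\partial_i\partial_j(v_iv_j)$. Set $\pi_1=(-\Delta)^{-1}\partial_i\partial_j(\phi v_iv_j)$. Then $\pi-\pi_1$ is harmonic in $B_{3/4}$, and $\nabla\pi_1$ is a Calder\'on--Zygmund operator applied to $\nabla(\phi\, v\otimes v)$. Using the Sobolev--Poincar\'e inequality $\|v\|_{L^6(B_1)}\le C\|\nabla u\|_{L^2(B_1)}$, this gives
\[
\|\nabla\pi_1(t)\|_{L^{3/2}(\R^3)}\le C\bigl(\|v(t)\|_{L^3(B_1)}^2+\|v(t)\|_{L^6(B_1)}\|\nabla u(t)\|_{L^2(B_1)}\bigr)\le C\|\nabla u(t)\|_{L^2(B_1)}^2 .
\]
Integrating over $(-r^2,0)$ yields the $(\rho/r)E(\rho)$ term with the genuine $L^{1,3/2}$ norm. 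Your Step 3 then goes through unchanged with this $\pi_1$.
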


\begin{lemma}(\cite{WY}, $\eps$-regularity)\label{lem:epsilon-regularity}
There exists $\eps_0>0$ such that if, for some $r$,
\begin{equation}\label{eq:epsilon-regularity}
E_2(r)+E(r)+P_{1,3/2}(\nabla\pi,r)
\le\eps_0,
\end{equation}
then the center of the cylinder is a regular point.
\end{lemma}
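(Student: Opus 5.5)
The plan is to rescale the cylinder to unit size and then check that the three smallness quantities in \eqref{eq:epsilon-regularity} control the norms in the one-scale criterion \eqref{optical} of He, Wang and Zhou.

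\textbf{Rescaling.} By translation invariance we may take the center to be $(0,0)$. The functionals $E_2$, $E$ and $P_{1,3/2}(\nabla\pi,\cdot)$ are invariant under the Navier--Stokes scaling. Replacing $(u,\pi)$ by $(u_r,\pi_r)$ with $\lambda=r$ gives again a suitable weak solution, and in the new variables
\[
\|u\|_{L^2(Q(1))}^2+\|\nabla u\|_{L^2(Q(1))}^2+\|\nabla\pi\|_{L^{1,3/2}(Q(1))}\le \eps_0 .
\]
Regularity of $(0,0)$ for the rescaled pair is equivalent to regularity of the original center, so it suffices to treat $r=1$.

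\textbf{Velocity bound.} For a.e. $t\in(-1,0)$, the Sobolev embedding $H^1(B_1)\subset L^6(B_1)$ on the unit ball, in its inhomogeneous form, gives
\[
\|u(t)\|_{L^6(B_1)}\le C\bigl(\|\nabla u(t)\|_{L^2(B_1)}+\|u(t)\|_{L^2(B_1)}\bigr).
\]
Squaring and integrating in time yields $\|u\|_{L^{2,6}(Q(1))}^2\le C(E(1)+E_2(1))\le C\eps_0$. The exponents $p=2$, $q=6$ satisfy $2/p+3/q=3/2\in[1,2)$, so they are admissible in \eqref{optical}.

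\textbf{Pressure bound.} For a.e. $t$, apply the Poincar\'e--Sobolev inequality on $B_1$ to $\pi(t)-\avg\pi_{B_1}(t)$:
\[
\|\pi(t)-\avg\pi_{B_1}(t)\|_{L^1(B_1)}\le C\|\pi(t)-\avg\pi_{B_1}(t)\|_{L^{3}(B_1)}\le C\|\nabla\pi(t)\|_{L^{3/2}(B_1)} .
\]
Integrating over $(-1,0)$ gives $\|\pi-\avg\pi_{B_1}\|_{L^1(Q(1))}\le C\eps_0$.

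\textbf{Conclusion.} As remarked after \eqref{optical}, $\pi$ may be replaced by $\pi-\avg\pi_{B_1}(t)$, because $(u,\pi-\avg\pi_{B_1}(t))$ is still a suitable weak solution. Indeed, a function of $t$ alone does not change $\nabla\pi$. In the local energy inequality \eqref{eq:LEI} it contributes $\int\!\!\int c(s)\,u\cdot\nabla\phi$, which vanishes by $\nabla\cdot u=0$. Choosing $\eps_0$ so that $C\eps_0^{1/2}+C\eps_0$ is below the constant $\eps$ of \eqref{optical}, that criterion gives that $(0,0)$ is regular.

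The main point to verify is this invariance under subtracting a time-dependent spatial mean, and that \eqref{optical} is applied on exactly $Q(1)$. This is harmless, since \eqref{optical} is stated on $Q(1)$ and the constants above depend only on $B_1$. If one prefers to avoid citing \cite{[HWZ]}, the fallback is a standard iteration. One would use the local energy inequality \eqref{eq:LEI} together with Lemmas \ref{lem:velocity-interpolation} and \ref{lem:pressure-decay} to propagate smallness of $A+E+E_2+P_{1,3/2}$ to all smaller scales, and then conclude by \eqref{ckn2}. This route is longer but uses only the lemmas already stated.
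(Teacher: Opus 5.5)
Your argument is correct, and it supplies a proof where the paper gives none: the paper simply imports this lemma from \cite{WY}. You instead derive it directly from the He--Wang--Zhou criterion \eqref{optical}, which the paper already quotes in the introduction. The steps all check out:
\begin{itemize}
\item after scaling to $r=1$, $E(1)+E_2(1)$ controls $\|u\|_{L^{2,6}(Q(1))}^2$ by the inhomogeneous Sobolev embedding;
\item the pair $(p,q)=(2,6)$ is admissible, since $2/p+3/q=3/2\in[1,2)$;
\item the Poincar\'e--Sobolev embedding $W^{1,3/2}(B_1)\hookrightarrow L^3(B_1)$ converts $P_{1,3/2}(\nabla\pi,1)$ into a bound on $\|\pi-\avg\pi_{B_1}\|_{L^1(Q(1))}$.
\end{itemize}

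What your route buys is transparency. It shows the lemma is just the energy-level case of \eqref{optical}, and that the pressure-gradient hypothesis enters only through the $L^1$ norm of the pressure oscillation. The price is that you use \eqref{optical} as a black box, just as the paper uses \cite{WY}.

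One small technical point concerns subtracting $c(t)=\avg\pi_{B_1}(t)$. You should regard $(u,\pi-c)$ as a suitable weak solution in $Q(1)$, which is the local setting of \eqref{optical}, rather than in all of $\Omega_T$. For unbounded $\Omega$ the globally shifted pressure need not lie in $L^{3/2}(\Omega_T)$. Locally everything is fine:
\begin{itemize}
\item $c\in L^{3/2}(-1,0)$ by Jensen;
\item your check that the local energy inequality is unchanged, $\int c(s)\int u\cdot\nabla\phi\,\mathrm{d}x\,\mathrm{d}s=0$ by $\nabla\cdot u=0$, is exactly right.
\end{itemize}

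The fallback iteration you sketch is not needed. As written, it would also have to generate control of $A$, which is not among the hypotheses.
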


\begin{lemma}(Nested-ball averages of pressure control)\label{lem:nested-average}
For $q\geq 1$,
\begin{equation}\label{eq:nested-average}
\int_{B_{2\rho}}|\pi-\avg\pi_{B_{2\rho}}|^q\dd x
\le C_q\int_{B_{2\rho}}|\pi-\avg\pi_{B_{4\rho}}|^q\dd x.
\end{equation}
\end{lemma}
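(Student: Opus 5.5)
The inequality is, for each fixed time $t$, a statement about a single function $f:=\pi(\cdot,t)\in L^q(B_{4\rho})$. So I would prove it pointwise in $t$, as a purely spatial estimate, with $\avg\pi_{B_{4\rho}}$ and $\avg\pi_{B_{2\rho}}$ denoting the spatial means at that time. The idea is to insert the coarser mean $c:=\avg f_{B_{4\rho}}$ and use that subtracting the mean over $B_{2\rho}$ is an $L^q(B_{2\rho})$-bounded operation. That operation also annihilates constants, so the constant $c$ can be inserted for free.

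\emph{Step 1: insert the coarse mean.} Since $\avg{(f-c)}_{B_{2\rho}}=\avg f_{B_{2\rho}}-c$, I would write
\[
f-\avg f_{B_{2\rho}}=(f-c)-\avg{(f-c)}_{B_{2\rho}} .
\]
Minkowski's inequality in $L^q(B_{2\rho})$ then gives
\[
\|f-\avg f_{B_{2\rho}}\|_{L^q(B_{2\rho})}\le \|f-c\|_{L^q(B_{2\rho})}+|B_{2\rho}|^{1/q}\,\bigl|\avg{(f-c)}_{B_{2\rho}}\bigr| .
\]

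\emph{Step 2: Jensen/H\"older for the mean.} For $q\ge1$, H\"older's inequality gives
\[
\bigl|\avg{(f-c)}_{B_{2\rho}}\bigr|\le |B_{2\rho}|^{-1}\int_{B_{2\rho}}|f-c|\dd x\le |B_{2\rho}|^{-1/q}\|f-c\|_{L^q(B_{2\rho})}.
\]
Hence the second term in Step 1 is at most $\|f-c\|_{L^q(B_{2\rho})}$. Altogether $\|f-\avg f_{B_{2\rho}}\|_{L^q(B_{2\rho})}\le 2\|f-c\|_{L^q(B_{2\rho})}$.

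\emph{Step 3: raise to the $q$-th power.} This yields the stated bound with $C_q=2^q$, independent of $\rho$ by scaling. The same inequality then holds after integrating in $t$ over $I_{4\rho}$ or any subinterval, which is how the lemma will be used on $Q_{4\rho}$ together with the criterion of Theorem~\ref{thm:density}.

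There is no real obstacle here; the one point to keep straight is that the mean $c$ is taken over the larger ball $B_{4\rho}$, whereas the integrals are over $B_{2\rho}$. The argument never uses any relation between $c$ and $B_{2\rho}$, because $c$ is simply an arbitrary constant. The lemma is therefore really the statement that $\|f-\avg f_{B}\|_{L^q(B)}\le 2\inf_{a\in\R}\|f-a\|_{L^q(B)}$. I would also note that the mean exists because $\pi(\cdot,t)\in L^{5/3}\subset L^1$ for a.e.\ $t$.
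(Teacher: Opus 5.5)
Your proposal is correct and is essentially the paper's argument: the paper likewise writes $\avg\pi_{B_{2\rho}}-\avg\pi_{B_{4\rho}}$ as the $B_{2\rho}$-average of $\pi-\avg\pi_{B_{4\rho}}$, bounds it by Jensen's inequality, and concludes with the $L^q$ triangle inequality. Your version additionally makes the constant $C_q=2^q$ explicit; it is $\rho$-independent directly, with no scaling argument needed.
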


\begin{proof}
First write
\begin{equation}\label{eq:average-difference}
\avg\pi_{B_{2\rho}}-\avg\pi_{B_{4\rho}}
=\frac1{|B_{2\rho}|}\int_{B_{2\rho}}
  (\pi-\avg\pi_{B_{4\rho}})\dd x.
\end{equation}
Jensen's inequality gives
\begin{equation}\label{eq:jensen-average}
|B_{2\rho}|\,
|\avg\pi_{B_{2\rho}}-\avg\pi_{B_{4\rho}}|^q
\le\int_{B_{2\rho}}|\pi-\avg\pi_{B_{4\rho}}|^q\dd x.
\end{equation}
The $L^q$ triangle inequality now proves \eqref{eq:nested-average}.
\end{proof}

\section{From coarse-scale to refined-scale estimates}\label{sec:coarse}

After translating the center, assume $z_0=(0,0)$.  Suppose that
\begin{equation}\label{eq:H-small}
\begin{aligned}
\iint_{Q_{4\rho}}\Bigl(
|\nabla u|^2+|u|^{10/3}
+|\pi-\avg\pi_{B_{4\rho}}|^{5/3}
+|\nabla\pi|^{5/4}\Bigr)\dd x\dd t
\le\eps\rho^{15/13},
\end{aligned}
\end{equation}
where $0<\eps<1$ will later be chosen sufficiently small.

\subsection{Coarse estimates from   $Q(4\rho)$ to  $Q(2\rho)$}

In the local energy inequality, choose a cutoff supported in $Q_{4\rho}$ and
equal to one on $Q_{2\rho}$. Like in \cite{WY}, using $\nabla\cdot u=0$ and estimating the
velocity and pressure terms by H\"older, Gagliardo--Nirenberg, and
Poincar\'e--Sobolev inequalities gives
\begin{equation}\label{eq:coarse-A-start}
\begin{aligned}
2\rho A(2\rho)
&\le C\Bigl(\iint_{Q_{4\rho}}|u|^{10/3}\Bigr)^{3/5}\\
&\quad+C\rho^{-1/2}
 \|u\|_{L^{10/3}(Q_{4\rho})}^{5/2}
 \|\nabla u\|_{L^2(Q_{4\rho})}^{1/2}\\
&\quad+C\rho^{-1/2}
 \|u\|_{L^{10/3}(Q_{4\rho})}
 \|\pi-\avg\pi_{B_{4\rho}}\|_{L^{5/3}(Q_{4\rho})}^{1/2}
 \|\nabla\pi\|_{L^{5/4}(Q_{4\rho})}^{1/2}.
\end{aligned}
\end{equation}
By \eqref{eq:H-small}, the three terms on the right are bounded respectively by
\begin{equation}\label{eq:coarse-A-three}
C\eps^{3/5}\rho^{9/13},
\qquad
C\eps\rho^{17/26},
\qquad
C\eps\rho^{17/26}.
\end{equation}
Since
 $0<\rho,\eps<1$, we obtain
\begin{equation}\label{eq:coarse-A}
A(2\rho)\le C\eps^{3/5}\rho^{-9/26}.
\end{equation}
On the other hand, \eqref{eq:H-small} directly gives
\begin{equation}\label{eq:coarse-E}
E(2\rho)\le C\eps\rho^{2/13}.
\end{equation}

\subsection{Refined estimates  from   $Q(2\rho)$ to  $Q(\rho)$ in terms of  mean-fluctuation decomposition}\label{sec:flux}

Let $\phi$ be supported in $Q_{2\rho}$, equal to one on $Q_\rho$, and satisfy
\begin{equation}\label{eq:cutoff}
|\partial_t\phi|+|\Delta\phi|\le C\rho^{-2},
\qquad |\nabla\phi|\le C\rho^{-1}.
\end{equation}
The local energy inequality becomes
\begin{equation}\label{eq:LEI-flux}
\rho\bigl(A(\rho)+E(\rho)\bigr)
\le C\bigl(I_1+I_2+I_3\bigr),
\end{equation}
where
\begin{equation}\label{eq:flux-definitions}
\begin{aligned}
I_1&:=\rho^{-2}\iint_{Q_{2\rho}}|u|^2\dd x\dd t,\\
I_2&:=\left|\iint_{Q_{2\rho}}|u|^2u\cdot\nabla\phi\dd x\dd t\right|,\\
I_3&:=\left|\iint_{Q_{2\rho}}
(\pi-\avg\pi_{B_{2\rho}})
u\cdot\nabla\phi\dd x\dd t\right|.
\end{aligned}
\end{equation}

By H\"older's inequality and $|Q_{2\rho}|\simeq\rho^5$,
\begin{equation}\label{eq:I0}
\begin{aligned}
I_1
&\le C\rho^{-2}|Q_{2\rho}|^{2/5}
\Bigl(\iint_{Q_{2\rho}}|u|^{10/3}\dd x\dd t\Bigr)^{3/5}\\
&\le C\eps^{3/5}\rho^{9/13}.
\end{aligned}
\end{equation}

As for the cubic velocity flux term $I_2$, we use the following mean-fluctuation decomposition.

Set
\begin{equation}\label{eq:mean-fluctuation}
m(t):=\avg u_{B_{2\rho}}(t),
\qquad
v(x,t):=u(x,t)-m(t).
\end{equation}
Here,
\[
\avg u_{B_{2\rho}}(t):=\frac1{|B_{2\rho}|}\int_{B_{2\rho}} u \dd x.
\]
Then $\avg v_{B_{2\rho}}=0$ and $\nabla v=\nabla u$. Since $m(t)$ is
independent of $x$ and $\phi$ vanishes near the spatial boundary, we have
\begin{equation}\label{eq:constant-flux-zero}
\int_{B_{2\rho}}|m(t)|^2m(t)\cdot\nabla\phi\dd x=0.
\end{equation}
We may therefore subtract the constant-vector flux $|m|^2m$ in the term $I_2$ , then which yield 
\begin{equation}\label{eq:subtract-constant-flux}
I_2\le\iint_{Q_{2\rho}}
\bigl||u|^2u-|m|^2m\bigr|\,|\nabla\phi|\dd x\dd t.
\end{equation}
Expanding $u=m+v$ gives the pointwise bound
\begin{equation}\label{eq:cubic-pointwise}
\bigl||u|^2u-|m|^2m\bigr|
\le C\bigl(|v|^3+|m||v|^2+|m|^2|v|\bigr).
\end{equation}

By  the minimizing property of the integral mean,
H\"older's inequality, and Poincar\'e inequality:

\begin{equation}\label{eq:v-m-basic}
\begin{aligned}
\|v\|_{L_t^\infty L_x^2(Q_{2\rho})}^2
&\le C\rho A(2\rho),\\
\|m\|_{L_t^\infty(-4\rho^2,0)}^2
&\le C\rho^{-2}A(2\rho),\\
\|v\|_{L^2(Q_{2\rho})}^2
&\le C\rho^3E(2\rho).
\end{aligned}
\end{equation}
The last estimate is obtained explicitly as follows:
\[
\begin{aligned}
\iint_{Q_{2\rho}}|v|^2\dd x\dd t
&\le C\rho^2\iint_{Q_{2\rho}}|\nabla v|^2\dd x\dd t\\
&=C\rho^2\iint_{Q_{2\rho}}|\nabla u|^2\dd x\dd t
\le C\rho^3E(2\rho).
\end{aligned}
\]

The Gagliardo--Nirenberg  inequalities yield
\begin{equation}\label{eq:v-L3}
\|v(t)\|_{L^3(B_{2\rho})}^3
\le C\|v(t)\|_{L^2(B_{2\rho})}^{3/2}\|\nabla v(t)\|_{L^2(B_{2\rho})}^{3/2}.
\end{equation}
Applying H\"older's inequality in time, we obtain
\begin{equation}\label{eq:N1-detail}
\begin{aligned}
&\rho^{-1}\iint_{Q_{2\rho}}|v|^3\dd x\dd t\\
&\le C\rho^{-1}
\Bigl(\esssup_{-4\rho^2\le t\le 0}\|v(t)\|_{L^2(B_{2\rho})}^2\Bigr)^{3/4}
\Bigl(\int_{-4\rho^2}^{0}\|\nabla v(t)\|_{L^2(B_{2\rho})}^2\dd t\Bigr)^{3/4}
(4\rho^2)^{1/4}\\
&\le C\rho A(2\rho)^{3/4}E(2\rho)^{3/4}.
\end{aligned}
\end{equation}

By \eqref{eq:v-m-basic},
\begin{equation}\label{eq:N2-detail}
\begin{aligned}
&\rho^{-1}\iint_{Q_{2\rho}}|m||v|^2\dd x\dd t\\
&\le \rho^{-1}\|m\|_{L_t^\infty(I_{2\rho})}
\iint_{Q_{2\rho}}|v|^2\dd x\dd t\\
&\le C\rho A(2\rho)^{1/2}E(2\rho),
\end{aligned}
\end{equation}
and
\begin{equation}\label{eq:N3-detail}
\begin{aligned}
&\rho^{-1}\iint_{Q_{2\rho}}|m|^2|v|\dd x\dd t\\
&\le \rho^{-1}\|m\|_{L_t^\infty(I_{2\rho})}^2
|Q_{2\rho}|^{1/2}\|v\|_{L^2(Q_{2\rho})}\\
&\le C\rho A(2\rho)E(2\rho)^{1/2}.
\end{aligned}
\end{equation}
Consequently,
\begin{equation}\label{eq:In-AE}
I_2\le C\rho\Bigl[
A(2\rho)^{3/4}E(2\rho)^{3/4}
+A(2\rho)^{1/2}E(2\rho)
+A(2\rho)E(2\rho)^{1/2}
\Bigr].
\end{equation}

Substituting \eqref{eq:coarse-A}--\eqref{eq:coarse-E} into \eqref{eq:In-AE}, the three terms satisfy
\begin{equation}\label{eq:In-three-powers}
\begin{aligned}
\rho A^{3/4}(2\rho)E^{3/4}(2\rho)
&\le C\eps^{6/5}\rho^{89/104},\\
\rho A^{1/2}(2\rho)E(2\rho)
&\le C\eps^{13/10}\rho^{51/52},\\
\rho A(2\rho)E^{1/2}(2\rho)
&\le C\eps^{11/10}\rho^{19/26}.
\end{aligned}
\end{equation}
For $0<\rho,\eps<1$, thus
\begin{equation}\label{eq:In-final}
I_2\le C\eps^{11/10}\rho^{19/26}.
\end{equation}

By Lemma \ref{lem:nested-average}, the  $L^{5/3}$ norm of  $\pi-\avg\pi_{B_{4\rho}}$ on
$B_{4\rho}$ in \eqref{eq:H-small} also controls  $L^{5/3}$ norm of  $\pi-\avg\pi_{B_{2\rho}}$ on
$Q_{2\rho}$. Using $u=v+m$, split
\begin{equation}\label{eq:Ipi-split}
I_3\le I_{3 v}+I_{3 m},
\end{equation}
where
\begin{equation}\label{eq:Ipi-parts}
I_{3 v}:=\left|\iint_{Q_{2\rho}}(\pi-\avg\pi_{B_{2\rho}})\,v\cdot\nabla\phi\dd x\dd t\right|,
\qquad
I_{3 m}:=\left|\iint_{Q_{2\rho}}(\pi-\avg\pi_{B_{2\rho}}) \,m\cdot\nabla\phi\dd x\dd t\right|.
\end{equation}

By H\"older's inequality in time and \eqref{eq:v-m-basic},
\begin{equation}\label{eq:v-L4L2}
\begin{aligned}
\|v\|_{L_t^4L_x^2(Q_{2\rho})}
&\le (4\rho^2)^{1/4}
\|v\|_{L_t^\infty L_x^2(Q_{2\rho})}\\
&\le C\rho A(2\rho)^{1/2}.
\end{aligned}
\end{equation}
 Observe that
\begin{equation}\label{eq:pressure-interpolation-indices}
\frac12=\frac14\cdot\frac35
+\frac34\cdot\frac7{15},
\qquad
\frac7{15}=\frac45-\frac13,
\end{equation}
  H\"older's and Poincar\'e--Sobolev inequality therefore give
\begin{equation}\label{eq:a-L2}
\|(\pi-\avg\pi_{B_{2\rho}})(t)\|_{L^2(B_{2\rho})}
\le C\|(\pi-\avg\pi_{B_{2\rho}})(t)\|_{L^{5/3}}^{1/4}
\|\nabla\pi(t)\|_{L^{5/4}}^{3/4}.
\end{equation}
Integrating \eqref{eq:a-L2} in time from $-4\rho^2$ to $0$ with respect to $L^{4/3}$, then exploiting  H\"older's inequality. Explicitly,
\begin{equation}\label{eq:a-time-detail}
\begin{aligned}
\|(\pi-\avg\pi_{B_{2\rho}})\|_{L_t^{4/3}L_x^2}^{4/3}
&\le C\int_{-4\rho^2}^{0}
\|(\pi-\avg\pi_{B_{2\rho}})(t)\|_{L^{5/3}(B_{2\rho})}^{1/3}
\|\nabla\pi(t)\|_{L^{5/3}(B_{2\rho})}\dd t\\
&\le C
\Bigl(\int_{-4\rho^2}^{0}\|(\pi-\avg\pi_{B_{2\rho}})(t)\|_{L^{5/3}(B_{2\rho})}^{5/3}\dd t\Bigr)^{1/5}
\Bigl(\int_{-4\rho^2}^{0}\|\nabla\pi(t)\|_{L^{5/3}(B_{2\rho})}^{5/4}\dd t\Bigr)^{4/5}.
\end{aligned}
\end{equation}
Thus
\begin{equation}\label{eq:a-L43L2}
\begin{aligned}
\|(\pi-\avg\pi_{B_{2\rho}})\|_{L_t^{4/3}L_x^2(Q_{2\rho})}
&\le C\|(\pi-\avg\pi_{B_{2\rho}})\|_{L^{5/3}(Q_{2\rho})}^{1/4}
\|\nabla\pi\|_{L^{5/4}(Q_{2\rho})}^{3/4}\\
&\le C\eps^{3/4}\rho^{45/52}.
\end{aligned}
\end{equation}
Combining \eqref{eq:v-L4L2}, \eqref{eq:a-L43L2}, and
$|\nabla\phi|\le C/\rho$ yields
\begin{equation}\label{eq:Ipiv}
\begin{aligned}
I_{3 v}
&\le C\rho^{-1}
\|(\pi-\avg\pi_{B_{2\rho}})\|_{L_t^{4/3}L_x^2}
\|v\|_{L_t^4L_x^2}\\
&\le C\eps^{3/4}
\rho^{45/52}A(2\rho)^{1/2}\\
&\le C\eps^{21/20}\rho^{9/13}.
\end{aligned}
\end{equation}

Because $m(t)$ is constant in space and $\phi$ vanishes near the boundary,
integration by parts gives
\begin{equation}\label{eq:Ipm-integration-by-parts}
\int_{B_{2\rho}}(\pi-\avg\pi_{B_{2\rho}})\,m\cdot\nabla\phi\dd x
=-\int_{B_{2\rho}}\phi\,m\cdot\nabla\pi\dd x.
\end{equation}
Therefore
\begin{equation}\label{eq:Ipm-start}
I_{3 m}
\le C\|m\|_{L_t^\infty(I_{2\rho})}
\iint_{Q_{2\rho}}|\nabla\pi|\dd x\dd t.
\end{equation}
Space-time H\"older's inequality and \eqref{eq:H-small} imply
\begin{equation}\label{eq:grad-pi-L1}
\begin{aligned}
\iint_{Q_{2\rho}}|\nabla\pi|\dd x\dd t
&\le |Q_{2\rho}|^{1/5}
\|\nabla\pi\|_{L^{5/4}(Q_{2\rho})}\\
&\le C\eps^{4/5}\rho^{25/13}.
\end{aligned}
\end{equation}
Combining $\|m\|_{L_t^\infty(I_{2\rho})}\le C\rho^{-1}A(2\rho)^{1/2}$ with
\eqref{eq:coarse-A}, we get
\begin{equation}\label{eq:Ipm-final}
I_{3 m}
\le C\eps^{11/10}\rho^{3/4}.
\end{equation}

Hence \eqref{eq:Ipiv} and \eqref{eq:Ipm-final} give
\begin{equation}\label{eq:Ipi-final}
I_3\le C\eps^{21/20}\rho^{9/13}.
\end{equation}

Substitute \eqref{eq:I0}, \eqref{eq:In-final}, and \eqref{eq:Ipi-final}
into \eqref{eq:LEI-flux} and divide by $\rho$:
\begin{equation}\label{eq:A-three-terms}
\begin{aligned}
A(\rho)&\le C\Bigl(
\eps^{3/5}\rho^{-4/13}
+\eps^{11/10}\rho^{-7/26}
+\eps^{21/20}\rho^{-4/13}
\Bigr)\\
&\le C\eps^{3/5}\rho^{-4/13},
\end{aligned}
\end{equation}

\section{Proofs of Theorem \ref*{thm:density} and \ref*{thm:main}}\label{sec:closing}
{\bf Proof of Theorem \ref*{thm:density}}.

Choosing $\rho_0>0$ sufficiently small.
Set
\begin{equation}\label{eq:theta-beta}
\theta=\rho^\beta, \qquad \beta =\frac2{13}, 
\qquad 0<\theta<\frac18.
\end{equation}

By Lemma \ref{lem:velocity-interpolation} and the elementary estimate
\[
E(\theta\rho)\le\theta^{-1}E(\rho),
\]
we have
\begin{equation}\label{eq:E2E-small-scale}
E_2(\theta\rho)+E(\theta\rho)
\le C\bigl[\theta^{-1}E(\rho)+\theta^2A(\rho)\bigr].
\end{equation}

Combining\eqref{eq:coarse-E} with \eqref{eq:A-three-terms} imply
\begin{equation}\label{eq:first-branch-closing}
\begin{aligned}
E_2(\theta\rho)+E(\theta\rho)
&\le C\Bigl[
\eps\rho^{-\beta+\frac2{13}}
+\eps^{3/5}\rho^{2\beta-\frac4{13}}
\Bigr]\\
&\le C\eps^{3/5}.
\end{aligned}
\end{equation}

By \eqref{eq:H-small} and the definitions in \eqref{eq:functionals},
\begin{equation}\label{eq:P54-bound}
P_{5/4}(\nabla\pi,\rho)
\le C\eps^{4/5}\rho^{-1/13}.
\end{equation}
The Lemma \ref{lem:pressure-decay} gives
\begin{equation}\label{eq:pressure-closing}
\begin{aligned}
P_{1,3/2}(\nabla\pi,\theta\rho)
&\le C\theta^{-1}E(\rho)
+C\theta P_{5/4}(\nabla\pi,\rho)\\
&\le C\eps\rho^{-\beta+\frac2{13}}
+C\eps^{4/5}\rho^{\beta-\frac1{13}}\\
&\le C \eps^{4/5}
\end{aligned}
\end{equation}

 Choose $\eps>0$ sufficiently small such that the sum of \eqref{eq:first-branch-closing} and
\eqref{eq:pressure-closing} is less than the universal constant  $\eps_0$ in regularity criterion Lemma \ref{lem:epsilon-regularity}, hence the cylinder center is regular.
This proves Theorem \ref{thm:density}.

\vspace{3cm}

{\bf Proof of  Theorem \ref{thm:main}}.

By  Theorem \ref{thm:density}, for every
$z\in\mathcal{S}$ and every sufficiently small $r$,
\begin{equation}\label{eq:singular-lower-density}
\iint_{Q_{4r}(z)}\Bigl(
|\nabla u|^2+|u|^{10/3}
+|\pi-\avg\pi_{B_{4r}(z)}|^{5/3}
+|\nabla\pi|^{5/4}
\Bigr)\dd x\dd t
>\eps r^{15/13}.
\end{equation}

In light of known regularity results for suitable weak solutions to N-S equations, there exists 
a compact set  $K\subset\Omega_T$, such that $\mathcal{S}\subset K$.

For every  sufficiently small $\rho>0$, choose the maximal $\rho$-separated family
\[
\{z_j\}_{j=1}^{N_\rho}\subset\mathcal{S},
\]
such that the  parabolic cylinders $Q_{\rho/2}(z_j)$ may be chosen pairwise
disjoint, while $\cup_{j=1}^{N_\rho} Q_{\rho}(z_j)$  cover $\mathcal{S}$. Apply
\eqref{eq:singular-lower-density} with $r=\rho/8$; then
$Q_{4r}(z_j)=Q_{\rho/2}(z_j)$, so
\begin{equation}\label{eq:cover-sum}
\begin{aligned}
N_\rho\,\eps\Bigl(\frac\rho8\Bigr)^{15/13}
&\le\sum_{j=1}^{N_\rho}
\iint_{Q_{\rho/2}(z_j)} \Bigl(|\nabla u|^2+|u|^{10/3}
+|\pi-\avg\pi_{B_{4r}(z)}|^{5/3}
+|\nabla\pi|^{5/4}
\Bigr)\dd x\dd t\\
&\le \iint_{Q_T} \Bigl(|\nabla u|^2+|u|^{10/3}
+|\pi|^{5/3}
+|\nabla\pi|^{5/4}
\Bigr)\dd x\dd t <\infty.
\end{aligned}
\end{equation}
The right-hand side is controlled by disjointness of the cylinders and the
 integrability in \eqref{eq:integrability}. The pressure oscillation is
also controlled by Jensen's inequality:
\begin{equation}\label{eq:pressure-jensen-cover}
\int_B|\pi-\avg\pi_B|^{5/3}\dd x
\le C\int_B|\pi|^{5/3}\dd x.
\end{equation}
Thus,
\begin{equation}\label{eq:N-rho-bound}
N_\rho\le {C\over\eps} \rho^{-15/13}.
\end{equation}
By the definition \eqref{eq:box-dim} of upper box dimension,
\begin{equation}\label{eq:box-final}
\pdim(\mathcal{S})\le\frac{15}{13}.
\end{equation}
This proves Theorem \ref{thm:main}.

\end{document}